\newenvironment{myabstract}{\par\noindent
{\bf Abstract . } \small }
{\par\vskip8pt minus3pt\rm}
\newcounter{item}[section]
\newcounter{kirshr}
\newcounter{kirsha}
\newcounter{kirshb}
\newenvironment{enumroman}{\setcounter{kirshr}{1}
\begin{list}{(\roman{kirshr})}{\usecounter{kirshr}} }{\end{list}}
\newenvironment{enumarab}{\setcounter{kirshb}{1}
\begin{list}{(\arabic{kirshb})}{\usecounter{kirshb}} }{\end{list}}
\newenvironment{athm}[1]{\vskip3mm\par\noindent
{\bf #1 }. \slshape }
{\upshape\par\vskip10pt minus3pt}
\newtheorem{theorem}{Theorem}[section]
\newtheorem{lemma}[theorem]{Lemma}
\newenvironment{demo}[1]{\noindent{\bf #1.}\upshape\mdseries}
{\nopagebreak{\hfill\rule{2mm}{2mm}\nopagebreak}\par\normalfont}
\theoremstyle{definition}
\newtheorem{example}[theorem]{Example}
\newtheorem{definition}[theorem]{Definition}
\def\C{{\mathfrak{C}}}
\def\Fm{{\mathfrak{Fm}}}
\def\Nr{{\mathfrak{Nr}}}
\def\Sg{{\mathfrak{Sg}}}
\def\Fm{{\mathfrak{Fm}}}
\def\A{{\mathfrak{A}}}
\def\B{{\mathfrak{B}}}
\def\C{{\mathfrak{C}}}
\def\D{{\mathfrak{D}}}
\def\Sn{{\mathfrak{Sn}}}
\def\Lf{{\bf Lf}}
\def\Dc{{\bf Dc}}
\def\K{{\bf K}}
\def\K{{\bf K}}
\def\RCA{{\bf RCA}}
\def\(R)RA{{\bf (R)RA}}
\def\Dc{{\bf Dc}}
\def\Dc{{\bf Dc}}
\def\Di{{\bf Di}}
\def\B{{\sf B}}
\def\G{{\sf G}}
\def\K{{\sf K}}
\def\Nr{{\mathfrak{Nr}}}
\def\Nr{{\mathfrak{Nr}}}
\def\Zd{{\mathfrak Zd}}
\def\A{{\mathfrak{A}}}
\def\B{{\mathfrak{B}}}
\def\C{{\mathfrak{C}}}
\def\D{{\mathfrak{D}}}
\def\A{{\mathfrak{A}}}
\def\B{{\mathfrak{B}}}
\def\C{{\mathfrak{C}}}
\def\D{{\mathfrak{D}}}
\def\L{{\mathfrak{L}}}
\def\L{{\mathfrak{L}}}
\def\Bl{{\mathfrak{Bl}}}
\def\RCA{{\bf RCA}}
\def\G{{\bf G}}
\def\Fl{{\mathfrak{Fl}}}
\def\Nr{{\mathfrak{Nr}}}
\def\RCA{{\bf RCA}}
\def\Sg{{\mathfrak Sg}}
\def\P{{\mathfrak P}}
\def\Rl{{\mathfrak Rl}}
\def\Ig{{\sf Ig}}
\title{The amalgamation in Boolean, and non-Boolean algebras with operators}
\author{Tarek Sayed Ahmed \\
Department of Mathematics, Faculty of Science,\\ 
Cairo University, Giza, Egypt.
  }
\begin{document}
\maketitle

\begin{myabstract} We study the connections of the global amalgamation property, and some of its variations,
in the context of classes (not necessarily varieties)
of boolean algebras with operators, to the local interpolation and the congruence extension property 
on the free algebrs of the varieties generated by such classes.
\end{myabstract}

\section{Introduction}

The amalgamation property
(for classes of models), since its discovery, has played a dominent role in algebra and model theory. 
Algebraic logic is the natural interface between universal algebra and logic (in our present context a variant of first order logic). 
Indeed, in algebraic logic amalgamation properties in classes of algebras are proved to be equivalent to interpolation 
results in the corresponding 
logic. Pigozzi and Comer  worked  such equivalences for cylindric algebras, the latter for finite dimensions the former for infinite ones.

The principal context of \cite{P} is the class of {\it infinite dimensional} cylindric 
algebras, an equational formalism of first order logic.
In this paper Pigozzi deals basically with the following question:
Which subclasses of infinite dimensional cylindric algebras, 
other than  the class of locally finite ones, 
still have the (strong) amalgamation property.

The fact that the class of locally finite cylindric algebras 
has the strong amalgamation property, proved earlier by Diagneualt  
is equivalent to the fact that first order logic has the Craig interpolation property. 
The classes that Pigozzi deals with consist solely of algebras that are 
infinite dimensional and we assume, to simplify notation, 
that such classes of algebras are $\omega$-dimensional, where
$\omega$ is the least infinite ordinal. 
These classes include the class of $\omega$-dimensional 
locally finite algebras $(\Lf_{\omega}$), 
the class of dimension complemented algebras ($\Dc_{\omega}),$  
the class of $\omega$-dimensional diagonal algebras 
(exact definitions will be recalled below), and 
the class of $\omega$ dimensional 
semisimple algebras. 
Here a semisimple algebra is a subdirect product
of simple algebras.
All of these classes consist exclusively of algebras that are
representable, but  unlike $\RCA_{\omega}$ 
none of these classes is first order definable, least a variety. 
While the amalgamation property speaks about amalgamating algebras 
in such a way that the amalgam only agrees
on the common subalgebra, the amalgamation is said to be {\it strong}
if the common subalgebra 
is the {\it only} overlap between the two algebras in the amalgam.
The positive results of section 2.2 in combination
with the negative ones of section 2.3 of \cite{P} answer 
most of the natural questions
one could ask about amalgamation for cylindric algebras.  
In particular, Pigozzi proves that in the (strictly) increasing sequence
$$\Lf_{\omega}\subset \Dc_{\omega}\subset \Di_{\omega}\subset \RCA_{\omega}$$ 
the first and third classes have the amalgamation property while 
the second and fourth fail
to have it. However, most questions concerning the strong amalgamation property
for several classes of cylindric algebras were posed as open questions,  and
other closely related ones appeared after Pigozzi's paper was published.
In \cite{AUU} all of Pigozzi's questions are answered.

Here we carry out similar investigations in a much broader context, that of Boolean algebras with operators.
As a by product of our investigations we obtain several new results concerning algebraisations of first order logic, other than 
cylindric algebras. We will also have occasion to weaken the Boolean structure, 
dealing with non-classical or many valued logics.

\section{ Amalgamation}
We star by the relevant definitions:
\begin{definition}
\begin{enumarab}
\item $K$ has the \emph{Amalgamation Property } if for all $\A_1, \A_2\in K$ and monomorphisms
$i_1:\A_0\to \A_1,$ $i_2:\A_0\to \A_2$
there exist $\D\in K$
and monomorphisms $m_1:\A_1\to \D$ and $m_2:\A_2\to \D$ such that $m_1\circ i_1=m_2\circ i_2$.
\item If in addition, $(\forall x\in A_j)(\forall y\in A_k)
(m_j(x)\leq m_k(y)\implies (\exists z\in A_0)(x\leq i_j(z)\land i_k(z) \leq y))$
where $\{j,k\}=\{1,2\}$, then we say that $K$ has the superamalgamation property $(SUPAP)$.
\end{enumarab}
\end{definition}

\begin{definition} An algebra $\A$ has the {\it strong interpolation theorem}, $SIP$ for short, if for all $X_1, X_2\subseteq A$, $a\in \Sg^{\A}X_1$,
$c\in \Sg^{\A}X_2$ with $a\leq c$, there exist $b\in \Sg^{\A}(X_1\cap X_2)$ such that $a\leq b\leq c$.
\end{definition}

For an algebra $\A$, $Co\A$ denotes the set of congruences on $\A$.
\begin{definition}

An algebra $\A$ has the {\it congruence extension property}, or $CP$ for short,
 if for any $X_1, X_2\subset A$
if $R\in Co \Sg^{\A}X_1$ and $S\in Co \Sg^{A}X_2$ and
$$R\cap {}^2\Sg^{A}(X_1\cap X_2)=S\cap {}^2\Sg^{\A}{(X_1\cap X_2)},$$
then there exists a congruence $T$ on $\A$ such that
$$T\cap {}^2 \Sg^{\A}X_1=R \text { and } T\cap {}^2\Sg^{\A}{(X_2)}=S.$$

\end{definition}

Theorems \ref{supapgeneral}, \ref{CP}, \ref{weak} to come, 
give a flavour of the interconnections between the local properties of $CP$ and $SIP$ (on free algebras) and the global property of 
superamalgamation (of the entire class). 
Maksimova and Mad\'arasz proved that if interpolation holds in free algebras of a variety, then the variety has the
superamalgamation property.
Using a similar argument, we prove this implication in a slightly more
general setting. But first an easy  lemma:

\begin{lemma} Let $K$ be a class of $BAO$'s. Let $\A, \B\in K$ with $\B\subseteq \A$. Let $M$ be an ideal of $\B$. We then have:
\begin{enumarab}
\item $\Ig^{\A}M=\{x\in A: x\leq b \text { for some $b\in M$}\}$
\item $M=\Ig^{\A}M\cap \B$
\item if $\C\subseteq \A$ and $N$ is an ideal of $\C$, then
$\Ig^{\A}(M\cup N)=\{x\in A: x\leq b+c\ \text { for some $b\in M$ and $c\in N$}\}$
\item For every ideal $N$ of $\A$ such that $N\cap B\subseteq M$, there is an ideal $N'$ in $\A$
such that $N\subseteq N'$ and $N'\cap B=M$. Furthermore, if $M$ is a maximal ideal of $\B$, then $N'$ can be taken to be a maximal ideal of $\A$.

\end{enumarab}
\end{lemma}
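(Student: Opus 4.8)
The plan is to handle (1)--(3) as direct verifications that an explicitly described downward-closed set is the ideal generated, and then to extract (4) from (3) together with a Zorn's lemma argument. For (1), I would set $D=\{x\in A: x\le b \text{ for some } b\in M\}$ and check that $D$ is an ideal of $\A$: it is downward closed by construction; it is closed under join since $x_1\le b_1$ and $x_2\le b_2$ give $x_1+x_2\le b_1+b_2$ with $b_1+b_2\in M$; and it is closed under each (normal, additive, hence monotone) operator $f$ of $\A$ because $x\le b$ yields $f(x)\le f(b)$ with $f(b)\in M$, using that $M$ is an ideal of $\B$ and $b\in\B$. Since $M\subseteq D$ by reflexivity and any ideal containing $M$ must contain $D$ by downward closure, $D=\Ig^{\A}M$. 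Part (2) is then immediate in one direction, and for the reverse: if $x\in\Ig^{\A}M\cap\B$ then by (1) $x\le b$ for some $b\in M$, so $x\in M$ because $x\in\B$ and $M$ is an ideal of $\B$. Part (3) mimics (1) with $E=\{x\in A: x\le b+c \text{ for some } b\in M,\ c\in N\}$: closure under join uses $(b_1+c_1)+(b_2+c_2)=(b_1+b_2)+(c_1+c_2)$, closure under operators uses additivity $f(b+c)=f(b)+f(c)$ with $f(b)\in M$ and $f(c)\in N$, and taking $c=0$ or $b=0$ shows $M\cup N\subseteq E$, minimality again following from downward closure.

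The substance is in (4). Given an ideal $N$ of $\A$ with $N\cap B\subseteq M$, I would set $N'=\Ig^{\A}(M\cup N)$, which contains $N$. By (3), $x\in N'$ iff $x\le b+n$ for some $b\in M$ and $n\in N$. Clearly $M\subseteq N'\cap B$. For the reverse inclusion, take $x\in N'\cap B$ with $x\le b+n$. Then $x\cdot(-b)\le (b+n)\cdot(-b)\le n$, so $x\cdot(-b)\in N$; since $x,b\in B$ we also have $x\cdot(-b)\in B$, whence $x\cdot(-b)\in N\cap B\subseteq M$. As $x\cdot b\le b\in M$ and $x\cdot b\in B$ give $x\cdot b\in M$, we conclude $x=x\cdot b+x\cdot(-b)\in M$. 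Thus $N'\cap B=M$, which establishes the first assertion with $N\subseteq N'$.

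For the ``furthermore'' clause, assume $M$ is maximal in $\B$. I would apply Zorn's lemma to the family $\mathcal F$ of ideals $I$ of $\A$ with $N'\subseteq I$ and $I\cap B=M$, ordered by inclusion; it is nonempty (it contains $N'$) and closed under unions of chains, since the union of a chain of ideals is an ideal and intersection with $B$ distributes over the union. Let $N^{*}$ be a maximal element. Then $N^{*}\cap B=M\ne B$ forces $N^{*}\ne\A$, so $N^{*}$ is proper. If some proper ideal $J$ strictly contained $N^{*}$, then $M=N^{*}\cap B\subseteq J\cap B$, and maximality of $M$ would force $J\cap B=M$ or $J\cap B=B$; the former puts $J\in\mathcal F$, contradicting maximality of $N^{*}$, while the latter gives $1\in J$ and hence $J=\A$, contradicting properness. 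Hence $N^{*}$ is a maximal ideal of $\A$ with $N\subseteq N^{*}$ and $N^{*}\cap B=M$.

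The main obstacle is the identity $N'\cap B=M$ in (4): this is the single point where the hypothesis $N\cap B\subseteq M$, the fact that $M$ is an ideal of the subalgebra $\B$, and the Boolean decomposition $x=x\cdot b+x\cdot(-b)$ must all be combined. Everything else is routine bookkeeping, and once this identity is secured the maximal case reduces to the clean Zorn's lemma argument above, driven entirely by the maximality of $M$ in $\B$.
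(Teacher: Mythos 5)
Your proof is correct, and for items (1)--(3) you and the paper agree: these are routine verifications (the paper dismisses them with ``only (iv) deserves attention''). For item (4), however, you take a genuinely different route. The paper argues by reduction: it notes that the special case $N=\{0\}$ is straightforward --- there $N'=\Ig^{\A}M$ works by (1) and (2), and when $M$ is maximal one extends $\Ig^{\A}M$ to a maximal ideal of $\A$, whose intersection with $B$ is then forced back to $M$ by maximality of $M$ --- and then obtains the general case by passing to the quotients $\A/N$, $\B/(N\cap \B)$, $M/(N\cap \B)$ and pulling back along the canonical map $\A\to \A/N$, using the correspondence between ideals of $\A/N$ and ideals of $\A$ containing $N$. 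You instead work directly in $\A$: you take $N'=\Ig^{\A}(M\cup N)$ and verify $N'\cap B=M$ by hand via the decomposition $x=x\cdot b+x\cdot(-b)$, with $x\cdot(-b)\le n$ landing in $N\cap B\subseteq M$; this computation is exactly the content that the paper's quotient reduction hides inside the ideal correspondence. The paper's route buys brevity, since the quotient correspondence (which preserves maximality) does all the bookkeeping at once; your route buys self-containedness, avoids any appeal to quotient algebras, and puts part (3) to work exactly where it is natural (in the paper, (3) is only used later, in the proof of Theorem \ref{supapgeneral}). One simplification available to you in the \emph{furthermore} clause: since $N'$ is proper (indeed $1\in N'$ would force $1\in N'\cap B=M$), you may extend $N'$ to an \emph{arbitrary} maximal ideal $N^{*}$ of $\A$; then $N^{*}\cap B$ is a proper ideal of $\B$ containing $M$, hence equals $M$ by maximality of $M$ --- there is no need to restrict your Zorn family to ideals with trace exactly $M$ and then upgrade the maximal element afterwards.
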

\begin{demo}{Proof} Only (iv) deserves attention. The special case when $n=\{0\}$ is straightforward.
The general case follows from this one, by considering
$\A/N$, $\B/(N\cap \B)$ and $M/(N\cap \B)$, in place of $\A$, $\B$ and $M$ respectively.
\end{demo}
The previous lemma will be frequently used without being explicitly mentioned.

\begin{theorem}\label{supapgeneral} Let $K$ be a class of $BAO$'s such that $\mathbf{H}K=\mathbf{S}K=K$.
Assume that for all $\A, \B, \C\in K$, inclusions
$m:\C\to \A$, $n:\C\to \B$, there exist $\D$ with $SIP$ and $h:\D\to \C$, $h_1:\D\to \A$, $h_2:\D\to \B$
such that for $x\in h^{-1}(\C)$,
$$h_1(x)=m\circ h(x)=n\circ h(x)=h_2(x).$$
Then $K$ has $SUPAP$.
\bigskip
\bigskip
\bigskip
\bigskip
\bigskip
\bigskip
\bigskip
\bigskip
\bigskip
\bigskip
\bigskip
\begin{picture}(10,0)(-30,70)
\thicklines
\put (-10,0){$\D$}
\put(5,0){\vector(1,0){70}}\put(80,0){$\C$}
\put(5,5){\vector(2,1){100}}\put(110,60){$\A$}
\put(5,-5){\vector(2,-1){100}}\put(110,-60){$\B$}
\put(85,10){\vector(1,2){20}}
\put(85,-5){\vector(1,-2){20}}
\put(40,5){$h$}
\put(100,25){$m$}
\put(100,-25){$n$}
\put(50,45){$h_1$}
\put(50,-45){$h_2$}
\end{picture}
\end{theorem}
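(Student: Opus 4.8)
The plan is to manufacture the superamalgam directly as a quotient of the interpolation algebra $\D$ supplied by the hypothesis, and then to read off the superamalgamation inequality from $SIP$ together with part (iii) of the Lemma. Fix a span consisting of $\A,\B\in K$ with common subalgebra $\C$ and inclusions $m\colon\C\to\A$, $n\colon\C\to\B$ to be amalgamated, and let $\D$, $h\colon\D\to\C$, $h_1\colon\D\to\A$, $h_2\colon\D\to\B$ be as provided, with $\D$ enjoying $SIP$. Write $I_1=\ker h_1$ and $I_2=\ker h_2$; since we are in $BAO$'s these kernels are ideals, and $\A\cong\D/I_1$, $\B\cong\D/I_2$. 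Put $\E=\D/\Ig^{\D}(I_1\cup I_2)$. Because $\mathbf{H}K=\mathbf{S}K=K$, we have $\E\in K$, and since $I_1,I_2\subseteq\Ig^{\D}(I_1\cup I_2)$ the maps $h_1,h_2$ factor through $\E$, yielding homomorphisms $f_1\colon\A\to\E$ and $f_2\colon\B\to\E$. The compatibility clause $h_1=m\circ h=n\circ h=h_2$ on the common part then gives $f_1\circ m=f_2\circ n$, so $\E$ is at least a weak amalgam over $\C$; it remains to verify the $SUPAP$ condition, which will simultaneously force $f_1,f_2$ to be monomorphisms.

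For the core step, take $x\in A$ and $y\in B$ with $f_1(x)\le f_2(y)$ in $\E$. Lift them to $p,q\in\D$ with $h_1(p)=x$ and $h_2(q)=y$, choosing $p$ inside the subalgebra $\Sg^{\D}X_1$ carrying $\A$ and $q$ inside $\Sg^{\D}X_2$ carrying $\B$, where $X_1\cap X_2$ is the common generating set mapped by $h$ into $\C$. The assumption $f_1(x)\le f_2(y)$ unwinds to $p\cdot(-q)\in\Ig^{\D}(I_1\cup I_2)$, so by Lemma (iii) there are $u\in I_1$ and $v\in I_2$ with $p\cdot(-q)\le u+v$; rearranging gives $p\cdot(-u)\le q+v$, where $h_1(p\cdot(-u))=x$ (as $u\in\ker h_1$) and $h_2(q+v)=y$ (as $v\in\ker h_2$). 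Now apply $SIP$ in $\D$ to interpolate $b\in\Sg^{\D}(X_1\cap X_2)$ with $p\cdot(-u)\le b\le q+v$, and set $z:=h(b)\in\C$. Applying $h_1$ and the compatibility $h_1=m\circ h$ on $\Sg^{\D}(X_1\cap X_2)$ yields $x=h_1(p\cdot(-u))\le h_1(b)=m(z)$, while applying $h_2$ and $h_2=n\circ h$ yields $n(z)=h_2(b)\le h_2(q+v)=y$. Thus $z\in\C$ witnesses $x\le m(z)$ and $n(z)\le y$, which is exactly the $SUPAP$ condition; taking $y=0$ (and symmetrically $x=0$) recovers $z=0$ by injectivity of $m,n$, hence $x=0$, so $f_1$ and $f_2$ are the required monomorphisms.

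The main obstacle is the alignment of the three maps with a single common system of generators so that the $SIP$ step is legitimate. Concretely, one must arrange that the lifts and the rearranged elements genuinely lie in $\Sg^{\D}X_1$ and $\Sg^{\D}X_2$ — which amounts to choosing $u\in I_1$ and $v\in I_2$ within the traces $I_1\cap\Sg^{\D}X_1$ and $I_2\cap\Sg^{\D}X_2$, i.e. ensuring that $\ker h_1$ and $\ker h_2$ are generated by these traces — and that $\Sg^{\D}(X_1\cap X_2)$ is precisely the locus on which $h$ agrees with $m$ and $n$ (this is the content of the clause ``for $x\in h^{-1}(\C)$''). Once this bookkeeping is in place, the interpolant $b$ descends through $h$ to the required $z\in\C$, and $SIP$ together with Lemma (iii) does the rest; the Boolean manipulations ($p\cdot(-q)\le u+v$ giving $p\cdot(-u)\le q+v$, and so on) are routine. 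I expect this generator-alignment, rather than any use of the operators of the $BAO$, to be the delicate point, since the operators enter only through the requirement that the kernels $I_1,I_2$ be closed under them, which is automatic.
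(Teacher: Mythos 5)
Your proof is correct and follows essentially the same route as the paper's: both quotient $\D$ by the ideal $P=\Ig^{\D}(\ker h_1\cup\ker h_2)$, use part (iii) of the Lemma to bound the witnessing element by a sum $u+v$ with $u\in\ker h_1$, $v\in\ker h_2$, rearrange, and apply $SIP$ to get an interpolant that $h$ pushes down to the required element of $\C$. The only minor difference is that you derive injectivity of the two induced embeddings as the special case $y=0$ of the superamalgamation inequality, whereas the paper verifies it separately (by the same $SIP$ argument) through the identities $P\cap \D_1=\ker h_1$ and $P\cap\D_2=\ker h_2$, which also give properness of $P$.
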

\bigskip
\bigskip
\begin{proof} Let $\D_1=h_1^{-1}(\A)$ and $\D_2=h_2^{-1}(\B)$. Then $h_1:\D_1\to \A$, and $h_2:\D_2\to \B$.

Let $M=ker h_1$ and $N=ker h_2$, and let
$\bar{h_1}:\D_1/M\to \A, \bar{h_2}:\D_2/N\to \B$ be the induced isomorphisms.

Let $l_1:h^{-1}(\C)/h^{-1}(\C)\cap M\to \C$ be defined via $\bar{x}\to h(x)$, and
$l_2:h^{-1}(\C)/h^{-1}(\C)\cap N$ to $\C$ be defined via $\bar{x}\to h(x)$.
Then those are well defined, and hence
$k^{-1}(\C)\cap M=h^{-1}(\C)\cap N$.
Then we show that $\P=\Ig(M\cup N)$ is a proper ideal and $\D/\P$ is the desired algebra.
Now let $x\in \mathfrak{Ig}(M\cup N)\cap \D_1$.
Then there exist $b\in M$ and $c\in N$ such that $x\leq b+c$. Thus $x-b\leq c$.
But $x-b\in \D_1$ and $c\in \D_2$, it follows that there exists an interpolant
$d\in \D_1\cap \D_2$  such that $x-b\leq d\leq c$. We have $d\in N$
therefore $d\in M$, and since $x\leq d+b$, therefore $x\in M$.
It follows that
$\mathfrak{Ig}(M\cup N)\cap \D_1=M$
and similarly
$\mathfrak{Ig}(M\cup N)\cap \D_2=N$.
In particular $P=\mathfrak{Ig}(M\cup N)$ is a proper ideal.

Let $k:\D_1/M\to \D/P$ be defined by $k(a/M)=a/P$
and $h:\D_2/N\to \D/P$ by $h(a/N)=a/P$. Then
$k\circ m$ and $h\circ n$ are one to one and
$k\circ m \circ f=h\circ n\circ g$.
We now prove that $\D/P$ is actually a
superamalgam. i.e we prove that $K$ has the superamalgamation
property. Assume that $k\circ m(a)\leq h\circ n(b)$. There exists
$x\in \D_1$ such that $x/P=k(m(a))$ and $m(a)=x/M$. Also there
exists $z\in \D_2$ such that $z/P=h(n(b))$ and $n(b)=z/N$. Now
$x/P\leq z/P$ hence $x-z\in P$. Therefore  there is an $r\in M$ and
an $s\in N$ such that $x-r\leq z+s$. Now $x-r\in \D_1$ and $z+s\in\D_2,$ it follows that there is an interpolant
$u\in \D_1\cap \D_2$ such that $x-r\leq u\leq z+s$. Let $t\in \C$ such that $m\circ
f(t)=u/M$ and $n\circ g(t)=u/N.$ We have  $x/P\leq u/P\leq z/P$. Now
$m(f(t))=u/M\geq x/M=m(a).$ Thus $f(t)\geq a$. Similarly
$n(g(t))=u/N\leq z/N=n(b)$, hence $g(t)\leq b$. By total symmetry,
we are done.
\end{proof}

The intimate relationship between $CP$ on free algebras generating a certain variety and the $AP$ for such varieties, 
has been worked out extensively by Pigozzi
for various classes of cylindric algebras. Here we prove an implication in one direction for $BAO$'s.
Notice that we do not assume that our class is a variety.
\begin{theorem}\label{CP}
Let $K$ be such that $\mathbf{H}K=\mathbf{S}K=K$. If $K$ has the amalgamation  property, then the $V(K)$
free algebras, on any set of generators, have $CP$.
\end{theorem}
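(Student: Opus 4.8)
The plan is to feed the amalgamation hypothesis on $K$ into a single quotient of the free algebra that restricts correctly to the two generated subalgebras. Fix $\Fr=\Fr_{\beta}V(K)$ for an arbitrary $\beta$ and $X_1,X_2\subseteq\Fr$, and pass from congruences to ideals (for $BAO$'s the two correspond via the Boolean reduct). Let $M,N$ be the ideals of $\Sg^{\Fr}X_1,\Sg^{\Fr}X_2$ determined by $R,S$, set $\C_0=\Sg^{\Fr}(X_1\cap X_2)$, and note that the hypothesis on $R,S$ reads $M\cap C_0=N\cap C_0=:L$. Form $\A_1=\Sg^{\Fr}X_1/M$, $\A_2=\Sg^{\Fr}X_2/N$ and $\A_0=\C_0/L$. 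The two inclusions $\C_0\subseteq\Sg^{\Fr}X_i$ induce maps $i_1\colon\A_0\to\A_1$ and $i_2\colon\A_0\to\A_2$, which are monomorphisms exactly because $M\cap C_0=N\cap C_0=L$.

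Next I would apply the amalgamation property of $K$ to $i_1,i_2$ to get $\D\in K$ and monomorphisms $m_1\colon\A_1\to\D$, $m_2\colon\A_2\to\D$ with $m_1\circ i_1=m_2\circ i_2$. Composing with the quotient maps $q_1\colon\Sg^{\Fr}X_1\to\A_1$ and $q_2\colon\Sg^{\Fr}X_2\to\A_2$ yields $g_1=m_1\circ q_1$ and $g_2=m_2\circ q_2$ into $\D$ that agree on $\C_0$, since $m_1\circ i_1=m_2\circ i_2$. Because $\Fr$ is free, these can be amalgamated into a single homomorphism $g\colon\Fr\to\D$ with $g\restr{\Sg^{\Fr}X_i}=g_i$ --- this is the one point where freeness is used, and I return to it below. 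Putting $T=\ker g$ finishes the construction: as $m_1$ is injective, $T\cap{}^2\Sg^{\Fr}X_1=\ker g_1=\ker q_1=R$, and symmetrically $T\cap{}^2\Sg^{\Fr}X_2=S$.

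It is worth recording the dual, ideal-theoretic verification, since it isolates what really drives the argument. Take $P=\Ig^{\Fr}(M\cup N)$; by part (iii) of the Lemma $P=\{x:x\le b+c,\ b\in M,\ c\in N\}$, and the identity $P\cap\Sg^{\Fr}X_1=M$ reduces, on rewriting $x\le b+c$ as $x-b\le c$ with $x-b\in\Sg^{\Fr}X_1$ and $c\in\Sg^{\Fr}X_2$, to finding an interpolant $d\in\C_0$ with $x-b\le d\le c$; then $d\in N\cap C_0=L\subseteq M$ and $x\le b+d\in M$. This is verbatim the computation in the proof of Theorem \ref{supapgeneral}, and it shows that the whole statement rests on $\Fr$ having the strong interpolation property $SIP$.

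The hard part is legitimising the two steps flagged above, and I expect the first to be the real obstacle. The algebras $\A_0,\A_1,\A_2$ a priori lie only in $V(K)=\mathbf{H}\mathbf{S}\mathbf{P}K$: the free algebra and its subalgebras sit in $\mathbf{S}\mathbf{P}K$ and the quotients in $\mathbf{H}\mathbf{S}\mathbf{P}K$, so the amalgamation property of $K$ does not apply verbatim. This is where $\mathbf{H}K=\mathbf{S}K=K$ is spent, together with the fact that $BAO$'s form a congruence-distributive variety: by J\'onsson's lemma the subdirectly irreducible (in particular simple) members of $V(K)$ lie in $\mathbf{H}\mathbf{S}\mathbf{P}_{u}K$, which under $\mathbf{H}K=\mathbf{S}K=K$ permits realising the quotients one amalgamates inside $K$. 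Concretely, using part (iv) of the Lemma I would extend a maximal ideal of $\C_0$ separating the lower cone of $c$ from the upper cone of $a$ to maximal, hence simple-quotient-inducing, ideals of $\Sg^{\Fr}X_1$ and $\Sg^{\Fr}X_2$ in which $a\mapsto 1$ and $c\mapsto 0$, amalgamate these simple $K$-algebras over the common $K$-quotient of $\C_0$, and pull back along the free homomorphism to force $1_{\D}\le 0_{\D}$ from $a\le c$ --- the contradiction establishing $SIP$. The freeness-based gluing is the secondary gap: it is immediate when $X_1,X_2$ lie among the free generators, for then $\Sg^{\Fr}(X_1\cup X_2)$ is the amalgamated coproduct of $\Sg^{\Fr}X_1$ and $\Sg^{\Fr}X_2$ over $\C_0$, and the general case must be reduced to this; everything else is the routine ideal bookkeeping licensed by the Lemma.
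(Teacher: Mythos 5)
Your first two paragraphs are, up to the routine translation between congruences and ideals, exactly the paper's proof: you form $\A_1=\Sg^{\Fr}X_1/M$, $\A_2=\Sg^{\Fr}X_2/N$ and the base algebra $\A_0$ (the paper realises it inside $\A_1$ as $(\Sg^{\Fr}X_1/R)^{(X_1\cap X_2)}$, you as $\C_0/L$; these are isomorphic via the paper's $\bar{\theta}$), you amalgamate, glue the composites $m_i\circ q_i$ into a single homomorphism $g$ on $\Fr$ by freeness, and set $T=\ker g$, the restrictions coming out right because $m_1,m_2$ are injective. Your reading of the freeness step --- that $X_1,X_2$ lie among the free generators --- is also the paper's reading of ``on any set of generators''; there is no further ``general case'' to reduce to.

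The trouble is in the two ``legitimisations'' you add, and both are wrong in substance. First, the claim that ``the whole statement rests on $\Fr$ having $SIP$'' inverts the point of the theorem: the kernel construction needs no interpolant anywhere, and that is precisely what distinguishes this result ($AP$ implies $CP$) from Theorem \ref{supapgeneral}, where an interpolation hypothesis is what produces amalgamation. Your alternative congruence, the one induced by $P=\Ig^{\Fr}(M\cup N)$, would indeed need $SIP$ to restrict correctly, but $SIP$ of $\Fr$ is neither a hypothesis nor available here, so that route verifies nothing and building on it would be circular. Second, your repair of the genuine subtlety --- that $\A_0,\A_1,\A_2$ a priori lie in $V(K)$ rather than in $K$ --- fails: J\'onsson's lemma places the subdirectly irreducible members of $V(K)$ in $\mathbf{H}\mathbf{S}\mathbf{P}_{u}K$, and the hypotheses $\mathbf{H}K=\mathbf{S}K=K$ do nothing to eliminate $\mathbf{P}_{u}$, under which $K$ is not assumed closed; moreover your concrete sketch (separating maximal ideals forcing $1_{\D}\leq 0_{\D}$) is again aimed at an interpolation property, i.e.\ at the wrong target. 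The paper makes no use of congruence distributivity at all: it applies the amalgamation hypothesis directly to $\A_0,\A_1,\A_2$, the closure conditions $\mathbf{H}K=\mathbf{S}K=K$ being what carries membership in $K$ from the free algebra and its generated subalgebras down to these quotients (that the $V(K)$-free algebra itself arises from members of $K$ is left implicit in the paper; your proposal does not close that gap either, it only relocates it behind machinery that does not apply).
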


\begin{proof}
For $R\in Co\A$ and $X\subseteq  A$, by $(\A/R)^{(X)}$ we understand the subalgebra of
$\A/R$ generated by $\{x/R: x\in X\}.$ Let $\A$, $X_1$, $X_2$, $R$ and $S$ be as specified in in the definition of $CP$.
Define $$\theta: \Sg^{\A}(X_1\cap X_2)\to \Sg^{\A}(X_1)/R$$
by $$a\mapsto a/R.$$
Then $ker\theta=R\cap {}^2\Sg^{\A}(X_1\cap X_2)$ and $Im\theta=(\Sg^{\A}(X_1)/R)^{(X_1\cap X_2)}$.
It follows that $$\bar{\theta}:\Sg^{\A}(X_1\cap X_2)/R\cap {}^2\Sg^{\A}(X_1\cap X_2)\to (\Sg^{\A}(X_1)/R)^{(X_1\cap X_2)}$$
defined by
$$a/R\cap {}^{2}\Sg^{\A}(X_1\cap X_2)\mapsto a/R$$
is a well defined isomorphism.
Similarly
$$\bar{\psi}:\Sg^{\A}(X_1\cap X_2)/S\cap {}^2\Sg^{\A}(X_1\cap X_2)\to (\Sg^{\A}(X_2)/S)^{(X_1\cap X_2)}$$
defined by
$$a/S\cap {}^{2}\Sg^{\A}(X_1\cap X_2)\mapsto a/S$$
is also a well defined isomorphism.
But $$R\cap {}^2\Sg^{\A}(X_1\cap X_2)=S\cap {}^2\Sg^{\A}(X_1\cap X_2),$$
Hence
$$\phi: (\Sg^{\A}(X_1)/R)^{(X_1\cap X_2)}\to (\Sg^{\A}(X_2)/S)^{(X_1\cap X_2)}$$
defined by
$$a/R\mapsto a/S$$
is a well defined isomorphism.
Now
$(\Sg^{\A}(X_1)/R)^{(X_1\cap X_2)}$ embeds into $\Sg^{\A}(X_1)/R$ via the inclusion map; it also embeds in $\A^{(X_2)}/S$ via $i\circ \phi$ where $i$
is also the inclusion map.
For brevity let $\A_0=(\Sg^{\A}(X_1)/R)^{(X_1\cap X_2)}$, $\A_1=\Sg^{\A}(X_1)/R$ and $\A_2=\Sg^{\A}(X_2)/S$ and $j=i\circ \phi$.
Then $\A_0$ embeds in $\A_1$ and $\A_2$ via $i$ and $j$ respectively.
Then there exists $\B\in V$ and monomorphisms $f$ and $g$ from $\A_1$ and $\A_2$ respectively to
$\B$ such that
$f\circ i=g\circ j$.
Let $$\bar{f}:\Sg^{\A}(X_1)\to \B$$ be defined by $$a\mapsto f(a/R)$$ and $$\bar{g}:\Sg^{\A}(X_2)\to \B$$
be defined by $$a\mapsto g(a/R).$$
Let $\B'$ be the algebra generated by $Imf\cup Im g$.
Then $\bar{f}\cup \bar{g}\upharpoonright X_1\cup X_2\to \B'$ is a function since $\bar{f}$ and $\bar{g}$ coincide on $X_1\cap X_2$.
By freeness of $\A$, there exists $h:\A\to \B'$ such that $h\upharpoonright_{X_1\cup X_2}=\bar{f}\cup \bar{g}$.
Let $T=kerh $. Then it is not hard to check that
$$T\cap {}^2 \Sg^{\A}(X_1)=R \text { and } T\cap {}^2\Sg^{\A}(X_2)=S.$$
\end{proof}
Finally we show that $CP$ implies a weak form of interpolation.
\begin{theorem}\label{weak}
If an algebra $\A$ has $CP,$  then for $X_1, X_2\subseteq \A$, if $x\in \Sg^{\A}X_1$ and $z\in \Sg^{\A}X_2$ are such that
$x\leq z$, then there exists $y\in \Sg^{\A}(X_1\cap X_2),$ and a term $\tau$ such that $x\leq y\leq \tau(z)$.
If $Ig^{\Bl\A}\{z\}=\Ig^{\A}\{z\},$ then $\tau$ can be chosen to be the
identity term. In particular, if $z$ is closed, or $\A$ comes from a discriminator variety,
then the latter case occurs.
\end{theorem}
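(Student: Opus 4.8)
The plan is to translate $CP$ from congruences into ideals and then to feed $CP$ two specific ideals built from $z$. Write $B_0=\Sg^{\A}(X_1\cap X_2)\subseteq B_1=\Sg^{\A}X_1$ and $B_2=\Sg^{\A}X_2$. Recall that the congruences of a $BAO$ correspond bijectively to its (operator-closed Boolean) ideals, a congruence $R$ matching the ideal $I_R=\{a:(a,0)\in R\}$, and that under this correspondence the restriction $R\cap {}^2\C$ to a subalgebra $\C$ matches $I_R\cap \C$. Thus $CP$ says: whenever $I_1$ is an ideal of $B_1$, $I_2$ an ideal of $B_2$ and $I_1\cap B_0=I_2\cap B_0$, there is an ideal $J$ of $\A$ with $J\cap B_1=I_1$ and $J\cap B_2=I_2$. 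I would also record the key reformulation of the goal: since the ideal generated by $z$ consists exactly of the elements lying below some (normal, additive) term applied to $z$, the clause ``$y\le\tau(z)$ for some term $\tau$'' is precisely $y\in\Ig^{\A}\{z\}$. So it suffices to produce $y\in\Ig^{\A}\{z\}\cap B_0$ with $x\le y$.

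The ideals I would use are $I_2=\Ig^{B_2}\{z\}$ on $B_2$ and $I_1=\Ig^{B_1}K$ on $B_1$, where $K:=\Ig^{\A}\{z\}\cap B_0$ is an ideal of $B_0$. First I would check these agree on $B_0$. On the $B_1$ side, generating $K$ up in $B_1$ and intersecting back with $B_0$ returns $K$ (the ``intersect-back'' clause of the preceding Lemma). On the $B_2$ side, $\Ig^{B_2}\{z\}\cap B_0=K$: the inclusion $\subseteq$ is clear, and for $\supseteq$ note that if $a\in B_0$ and $a\le\tau(z)$ then $\tau(z)\in\Sg^{\A}\{z\}\subseteq B_2$, so already $a\in\Ig^{B_2}\{z\}$. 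Now $CP$ delivers an ideal $J$ of $\A$ with $J\cap B_1=\Ig^{B_1}K$ and $J\cap B_2=\Ig^{B_2}\{z\}$. Since $z\in J\cap B_2\subseteq J$ and $J$ is downward closed, $x\le z$ forces $x\in J$; as $x\in B_1$ this gives $x\in J\cap B_1=\Ig^{B_1}K$. Finally, by the ``down-set'' description of a generated ideal (the first clause of the Lemma), $\Ig^{B_1}K$ is exactly the set of elements of $B_1$ below some member of $K$, so there is $y\in K$ with $x\le y$. This $y$ lies in $B_0$ and in $\Ig^{\A}\{z\}$, hence $x\le y\le\tau(z)$ for a suitable $\tau$, as wanted.

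For the refinements I would argue through the equality $\Ig^{\A}\{z\}=\Ig^{\Bl\A}\{z\}$. One always has $\Ig^{\Bl\A}\{z\}=\{a:a\le z\}\subseteq\Ig^{\A}\{z\}$, and the term $\tau$ is forced on us precisely when this inclusion is strict, i.e. when some operator pushes $z$ upward. If the two ideals coincide then $y\in\Ig^{\A}\{z\}$ collapses to $y\le z$ and $\tau$ may be taken to be the identity. When $z$ is closed (fixed, or not raised, by every operator) no term exceeds $z$, so $\tau(z)=z$ and the equality holds; this case is immediate. The discriminator-variety case is the one I expect to need the most care: there one uses that the operator-closure of the single element $z$ is controlled by the discriminator term, so that the generated ideal is again Boolean-principal and $\tau$ reduces to a fixed term. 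The genuine obstacle throughout is the ideal-theoretic bookkeeping around the operators---verifying that the two candidate ideals restrict to the same $K$ on $B_0$, and keeping straight that $\Ig^{\A}\{z\}$ is the operator-closure rather than the Boolean down-set of $z$, which is exactly what both produces $\tau$ and dictates when it disappears.
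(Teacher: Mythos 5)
Your proof is correct and takes essentially the same route as the paper's: both translate $CP$ into an ideal-extension statement, pair the ideal $\Ig^{\Sg^{\A}X_2}\{z\}$ with the ideal of $\Sg^{\A}X_1$ generated by its trace on $\Sg^{\A}(X_1\cap X_2)$ (your $K$ equals that trace, exactly as you verify), apply $CP$ to get a global ideal, and extract the interpolant $y$ from the down-set description of generated ideals, with the identity-term case handled identically via $y\le z$. The only differences are cosmetic---you obtain $x\in J\cap \Sg^{\A}X_1$ directly from $z\in J$ plus downward closure, where the paper routes through $\Ig^{\A}(N\cup M)\subseteq P$---and, just like the paper's own proof, you leave the closing ``in particular'' clause (closed $z$, discriminator varieties) without a detailed argument.
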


\begin{proof}
Now let $x\in \Sg^{\A}(X_1)$, $z\in \Sg^{\A}(X_2)$ and assume that $x\leq z$.
Then $$x\in (\Ig^{\A}\{z\})\cap \Sg^{\A}(X_1).$$
Let $$M=\Ig^{\A^{(X_1)}}\{z\}\text { and } N=\Ig^{\Sg^{\A}(X_2)}(M\cap \Sg^{\A}(X_1\cap X_2)).$$
Then $$M\cap \Sg^{\A}(X_1\cap X_2)=N\cap \Sg^{\A}(X_1\cap X_2).$$
By identifying ideals with congruences, and using the congruence extension property,
there is a an ideal $P$ of $\A$
such that $$P\cap \Sg^{\A}(X_1)=N\text { and }P\cap \Sg^{\A}(X_2)=M.$$
It follows that
$$\Ig^{\A}(N\cup M)\cap \Sg^{\A}(X_1)\subseteq P\cap \Sg^{\A}(X_1)=N.$$
Hence
$$(\Ig^{(\A)}\{z\})\cap A^{(X_1)}\subseteq N.$$
and we have
$$x\in \Ig^{\Sg^{\A}X_1}[\Ig^{\Sg^{\A}(X_2)}\{z\}\cap \Sg^{\A}(X_1\cap X_2).]$$
This implies that there is an element $y$ such that
$$x\leq y\in \Sg^{\A}(X_1\cap X_2)$$
and $y\in \Ig^{Sg^{\A}X}\{z\}$, hence the first required. The second required follows
follows, also immediately, since $y\leq z$, because $\Ig^{\A}\{z\}=\Rl_z\A$.
\end{proof}
 
We note that all of the above results hold for $MV$ algebras which satisfy all axioms of Boolean algebras except idempotency.

\section{Sheaf theoretic duality and epimorphisms}

Here we deal with non-classical logics; we review some known basic notions and concepts culminating in defining the algebras we shall deal with.
Our work closely follows Comer, except that we deal with Zarski topologies rather than Stone topologies. 
We obtain an analogous representabilty theorem to the effect that evey theory can be represented as the continous sections of a 
Sheaf. We start with the origin of our algebras.

\begin{definition}A $t$ norm is a binary operation $*$ on $[0,1]$, i.e $(t:[0,1]^2\to [0,1]$) such that
\begin{enumroman}
\item  $*$ is commutative and associative,
that is for all $x,y,z\in [0,1]$,
$$x*y=y*x$$
$$(x*y)*z=x*(y*z).$$
\item $*$ is non decreasing in both arguments, that is
$$x_1\leq x_2\implies x_1*y\leq x_2*y$$
$$y_1\leq y_2\implies x*y_1\leq x*y_2.$$
\item $1*x=x$ and $0*x=0$ for all $x\in [0,1].$
\end{enumroman}
\end{definition}
The following are the most important (known) examples of continuous $t$ norms.

\begin{enumroman}
\item Lukasiewicz $t$ norm: $x*y=max(0,x+y-1)$
\item Godel $t$ norm $x*y=min(x,y)$
\item Product $t$ norm $x*y=x.y$
\end{enumroman}
We have the following known result \cite{H} lemma 2.1.6
 
\begin{theorem} Let $*$ be a continuous $t$ norm. 
Then there is a unique operation $x\implies y$ satisfying for all $x,y,z\in [0,1]$, the condition $(x*z)\leq y$ iff $z\leq (x\implies y)$, namely 
$x\implies y=max\{z: x*z\leq y\}$
\end{theorem}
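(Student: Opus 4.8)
The plan is to produce the operation explicitly as a genuine maximum, verify the residuation biconditional from monotonicity, and finally deduce uniqueness by a purely order-theoretic argument that needs no topology. Throughout, fix $x,y\in[0,1]$ and set $S_{x,y}=\{z\in[0,1]: x*z\leq y\}$. This set is nonempty: by commutativity and the boundary condition, $x*0=0*x=0\leq y$, so $0\in S_{x,y}$. The candidate operation is $x\implies y:=\sup S_{x,y}$, and the whole issue is whether this supremum is actually attained, so that $x\implies y=\max S_{x,y}$ as claimed.

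The key step, and the one place where continuity is indispensable, is this \emph{attainment}. Consider the unary map $f_x\colon[0,1]\to[0,1]$ given by $f_x(z)=x*z$. Since $*$ is continuous, $f_x$ is continuous, and $S_{x,y}=f_x^{-1}([0,y])$ is the preimage of a closed interval, hence a closed subset of the compact space $[0,1]$. A nonempty closed subset of $[0,1]$ contains its supremum, so $s:=\sup S_{x,y}\in S_{x,y}$; that is, $x*s\leq y$ and $s=\max S_{x,y}$. Equivalently, one may take a sequence $z_n\in S_{x,y}$ with $z_n\to s$ and pass to the limit in $x*z_n\leq y$ using continuity of $*$. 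I would remark that left-continuity of $*$ already suffices for this, so the stated hypothesis is comfortably strong enough.

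With $x\implies y=\max\{z: x*z\leq y\}$ now in hand, the adjunction $x*z\leq y\iff z\leq(x\implies y)$ follows from monotonicity. If $x*z\leq y$ then $z\in S_{x,y}$, so $z\leq\max S_{x,y}=x\implies y$. Conversely, if $z\leq x\implies y$, then since $*$ is non-decreasing in its second argument, $x*z\leq x*(x\implies y)\leq y$, the last inequality holding precisely because $x\implies y\in S_{x,y}$ by attainment. This gives both directions of the required equivalence.

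Finally, uniqueness is immediate and uses no continuity. Suppose $g$ is any binary operation on $[0,1]$ satisfying $x*z\leq y\iff z\leq g(x,y)$ for all $x,y,z$. Then for every $z$ we have $z\leq g(x,y)\iff x*z\leq y\iff z\leq(x\implies y)$, so $g(x,y)$ and $x\implies y$ bound above exactly the same elements $z$; instantiating $z:=g(x,y)$ gives $g(x,y)\leq x\implies y$, and $z:=x\implies y$ gives the reverse inequality, whence $g(x,y)=x\implies y$. This is just the standard fact that a right adjoint in a poset is unique. The only genuine obstacle is the attainment of the maximum, which I expect to dispatch cleanly via the closedness-plus-compactness argument above.
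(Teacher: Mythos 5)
Your proof is correct. Note that the paper itself offers no proof of this statement: it is quoted as a known result from the cited reference (H\'ajek, Lemma 2.1.6), so there is no internal argument to compare against. Your proof is exactly the standard one: define $x\implies y$ as $\sup\{z: x*z\leq y\}$, use continuity of $*$ (and you correctly observe that left-continuity suffices, which is the sharp hypothesis) to show the supremum is attained, so the set has a maximum; deduce the adjunction $x*z\leq y\iff z\leq (x\implies y)$ from monotonicity of $*$ in the second argument together with attainment; and obtain uniqueness by the purely order-theoretic fact that an operation determined by such a biconditional is unique. Every step checks out, including the nonemptiness of $S_{x,y}$ via $x*0=0*x=0\leq y$ and the instantiation trick for uniqueness.
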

The operation $x\implies y$ is called the residuam of the $t$ norm. The residuam $\implies$ 
defines its corresponding unary operation of precomplement 
$(-)x=(x\implies 0)$.
The Godel negation satisfies $(-)0=1$, $(-)x=0$ for $x>0$.
Abstracting away from $t$ norms, we get

\begin{definition} A residuated lattice is an algebra
$$(L,\cup,\cap, *, \implies 0,1)$$ 
with four binary operations and two constants such that
\begin{enumroman}
\item $(L,\cup,\cap, 0,1)$ is a lattice with largest element $1$ and the least element $0$ (with respect to the lattice ordering defined the usual way: 
$a\leq b$ iff $a\cap b=a$).
\item $(L,*,1)$ is a commutative semigroup with largest element $1$, that is $*$ is commutative, associative, $1*x=x$ for all $x$.
\item Letting $\leq$ denote the usual lattice ordering, we have $*$ and $\implies $ form an adjoint pair, i.e for all $x,y,z$
$$z\leq (x\implies y)\Longleftrightarrow x*z\leq y.$$
\end{enumroman}
\end{definition}
$BL$ algebras, introduced and studied by Hajek \cite{H}, are what is called $MTL$ algebras satisfying the identity $x*(x\implies y)=x\cap y$.
Both are residuated lattices with extra conditions. The propositional logic $MTL$ was introduced by
Esteva and Godo \cite{E}.  It has three basic connectives $\to$, $\land$ and $\&$.
We say that $\L$ is a core fuzzy logic if $\L$ expands $MTL$, $\L$ has the Local Deduction Theorem $(LDT)$, 
and $\L$ satisfies 
(*) $\phi\equiv \psi\vdash \chi(\phi)\equiv \chi(\psi)$ for all formulas $\phi,\psi,\chi$. (Here $\equiv$ is defined via $\&$ and $\implies$).
The ($LDT)$ says that for a theory $T$ and a formula 
$\phi$, whenever  $T\cup \{\phi\}\vdash \psi$, then there exists a natural number $n$ such that $T\vdash \phi^n\to \psi$.
Here $\phi^n$ is defined inductively by $\phi^1=\phi$ and $\phi^n=\phi^{n-1}\&\phi$.
Thus core fuzzy logics are axiomatic expansions of $MTL$ having $LDT$ and obeying the substitution rule (*).
The basic notions of evaluation, tautology and model  for core fuzzy logics  are defined the usual way. 
Let $\L$ be a core fuzzy logic and $I$ the set of additional connectives of $\L$. An $\L$ algebra is a structure 
$\B=(B, \cup, \cap, *, \implies, (c_B)_{c\in I},0,1)$
such that $(B,\cup, \cap, *, \implies, 0,1)$ is an $MTL$ algebra and each additional axiom of $\L$ is a tautology of $\B$. 
Throughout the paper the operations of algebras are denoted by $\cup$, $\cap$, $\implies$ 
$*$ and the corresponding logical operations by $\lor, \land, \to, \&$.

Generalizing a very nice result of Comer we represent $BAO$;s 
as the continous sections of sheaves, 
the representation here is indeed a functor that is strongly invertble.
We start from a concrete example adressing varaints and extension first order logics. 
The following discussion applies to $L_n$, $L_{\omega,\omega}$, $Dc$, Keislers logics with and without equality, finitray logics of infinitary
relations. It also applies to non classical ologics, whose Stone space is the Zarski topology.

\begin{example}

Let $\L$ be a logic,  and $T$ be a theory in $\Fm_L$.
Let $\Sn_L$ denote the set of sentences, that is formulas with no 
free variables.  We assume that $T\in \Sn_L$ has no free variable, let $X_T=\{\Delta\subseteq \Sn_L: \Delta \text{ is complete }\}$.
This is simply the stone space of $\Sn_T$. For each $\Delta\in X_T$ let $\Fm_{\Delta}$
be the corresponding Tarski-Lindenbaum algebra.
Let $\delta T$ be the following disjoint union
$$\bigcup_{\Delta\in X_T}\{\Delta\}\times \Fm_{\Delta}.$$ Define the following topolgies, on $X_T$ and $\delta T$, respectively.

On $X_{\Gamma}$ the Stone topology, and on $\delta_{\Gamma}$ 
the topology with base $B_{\psi,\phi}=\{\Delta, [\phi]_{\Delta}, \psi\in \Delta, \Delta\in \Delta_{\Gamma}\}.$
Then $(X_T, \delta T)$ is a {\it sheaf}, and its dual consisting of continous sections, 
$\Gamma(T,\Delta)\cong \Fm_T$. 

Then the contnious sections of the sheaves 
$\Gamma(X_T, \delta(T))\cong \Fm_T$.
\end{example}

\begin{example}
It also applies to non classical logic. Let $\L$ be a predicte language for $BL$ algebras (This for example incudes $MV$ algebras).
Let $X_T$ be the Zarski topology on $\Sn$ based on $\{\Delta\in Max: a\notin \Delta\}$. 
Let $\delta T=\bigcup _{\Delta\in X_T}\{\Delta\}\times \Fm_{\Delta}$. 
\end{example}

\begin{definition} Let $\B$ be an algebra. A filter of $\B$ is a nonempty subset $F\subseteq A$ such that for all $a,b\in B$,
\begin{enumroman}
\item $a,b\in F$ implies $a*b\in F.$
\item $a\in F$ and $a\leq b$ imply $b\in F.$
\end{enumroman}
\end{definition}
It easy to check that if $F$ is a filter on $A$ then $1\in F$ and whenever $a, a\implies b\in F$ then $b\in F$.
Also $a*b\in F$ if and only if $a\cap b\in F$ iff $a\in F$ and $b\in F$. A filter $F$ is proper if $F\neq A$ and it is easy to see that a filter $F$ is proper
iff $0\notin F$. 

\begin{definition} A filter $P$ of $A$ is prime provided that it is a prime filter of the underlying lattice $L(\B)$ of $\B$, that is
$a\cup b\in P$ implies $a\in P$ or $b\in P$. This is equivalent to the statement that for all $a,b\in \B,$ $a\implies b\in P$ or $b\implies a\in P$.
A proper filter $F$ is maximal if it is not properly contained in any other proper filter. 
\end{definition}
We let $Max(\B)$ denote the set of maximal filters
and $Spec(\B)$ the family of prime filters. Then it is not hard to actually show that $Max(\B)\subseteq Spec(\B)$ \cite{spec}.
For a set $X\subseteq \B$, $\Fl^{\B}X$ denotes the filter generated by $X$. 
A filter $F$ is called principal, if $F=\Fl\{a\}=\{x\in B: x\geq a\}$.
The following notions are taken from \cite{spec}. Proofs are also found in \cite{spec}.
Let $\B$ be a non-trivial algebra. For each $X\subseteq \B$, we set
$$V(X)=\{P\in Spec(X): X\subseteq P\}.$$ Then the family $\{V(X)\}_{X\subseteq \B}$ of subsets of $spec(\B)$ 
satisfies the axioms for closed sets in a topological space. The resulting topology is called the 
Zariski topology, and the resulting topological space is called the prime spectrum of $\B$. We write $V(a)$ for the more cumbersome $V(\{a\})$.
For any $X\subseteq \B$, let
$$D(X)=\{P\in Spec(X): X\nsubseteq P\}$$
Then $\{D(X)\}_{X\subseteq A}$ is the family of open sets of the Zariski topology.
We write $D(a)$ for $D(\{a\})$.
The minimal spectrum of $\B$ is the topology induced by the Zariski topology on $Max(\B)$.
For $X\subseteq \B$ and $a\in \B$, let
$$V_M(X)=V(X)\cap Max(\B)\text { and } D_M(X)=D(X)\cap Max(\B).$$
$$V_M(a)=V(a)\cap Max(\B),\text { and } D_M(a)=D(a)\cap Max(\B).$$
In other words,
$$V_M(a)=\{F\in Max(\B): a\in F\}$$ 
and $$D_M(a)=\{F\in Max(\B): a\notin F\}.$$

\begin{lemma} Let $\B$ be an algebra. Let $a,b\in \B$.
Then the following hold:
\begin{enumroman}
\item $D_M(a)\cap D_{M}(b)=D_M(a\cup b).$
\item  $D_M(a)\cup D_M(b)=D_M(a\cap b)=D_M(a*b).$
\item $D_M(X)=Max(\B)$  iff $\Fl^{\B}X=\B.$
\item  $D_M(\bigcup_{i\in I}X_i)=\bigcup_{i\in I}D_M(X_i).$
\item $V_M(a)\cap V_M(b)=V_M(a\cap b).$
\item $a\leq b$ if and only if $V_M(a)\subseteq V_M(b).$
\end{enumroman}
\end{lemma}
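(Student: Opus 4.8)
The plan is to verify the six clauses by unwinding the definitions of $V_M$ and $D_M$ together with the two structural facts established just above: that for any filter $F$ one has $a*b\in F \Leftrightarrow a\cap b\in F \Leftrightarrow (a\in F \text{ and } b\in F)$, and that $Max(\B)\subseteq Spec(\B)$, so that every maximal filter is prime. Several clauses are then purely formal. For (iv) I would simply note that $\bigcup_{i\in I}X_i\subseteq F$ holds iff $X_i\subseteq F$ for every $i$, so a maximal filter lies in $D_M(\bigcup_i X_i)$ exactly when it fails to contain some $X_i$, which is the right-hand side. For (ii) and (v), the product identity for filters gives directly $a\cap b\in F \Leftrightarrow a*b\in F \Leftrightarrow (a\in F \text{ and } b\in F)$; taking complements inside $Max(\B)$ yields $D_M(a\cap b)=D_M(a*b)=D_M(a)\cup D_M(b)$, and reading the same equivalence positively yields $V_M(a\cap b)=V_M(a)\cap V_M(b)$.

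For (i) the inclusion $D_M(a\cup b)\subseteq D_M(a)\cap D_M(b)$ is monotonicity: since $a\le a\cup b$ and $b\le a\cup b$, upward closure of $F$ forces $a\cup b\in F$ whenever $a\in F$ or $b\in F$, so $a\cup b\notin F$ entails $a,b\notin F$. The reverse inclusion is exactly where primeness enters: if $a\cup b\in F$ with $F$ maximal, then $F$ prime gives $a\in F$ or $b\in F$, so $a,b\notin F$ forces $a\cup b\notin F$. For (iii) I would argue through properness: $\Fl^{\B}X=\B$ means the generated filter is improper (equivalently $0\in\Fl^{\B}X$), and then every filter containing $X$ contains $\Fl^{\B}X=\B$ and so is improper, hence not maximal, giving $D_M(X)=Max(\B)$; conversely, if $\Fl^{\B}X\ne\B$ it is a proper filter, and a Zorn's-lemma extension (the union of a chain of proper filters omits $0$, hence stays proper) produces a maximal filter $F\supseteq X$, so that $F\notin D_M(X)$.

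The easy half of (vi) is again upward closure: $a\le b$ and $a\in F$ give $b\in F$, whence $V_M(a)\subseteq V_M(b)$. The converse is the genuine obstacle. Assuming $a\not\le b$ (note $a\ne 0$, else $a\le b$ trivially), the principal filter $\Fl\{a\}=\{x:x\ge a\}$ is proper, contains $a$, and omits $b$ precisely because $a\not\le b$; I want to produce a \emph{maximal} filter with these two properties, which would exhibit a point of $V_M(a)\setminus V_M(b)$. A Zorn's-lemma argument on proper filters containing $\Fl\{a\}$ and omitting $b$ yields a filter maximal with respect to omitting $b$, and such a filter is prime; the delicate point, and the crux of the whole lemma, is to upgrade this to an honest maximal filter of $\B$. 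In general a filter maximal-relative-to-omitting-$b$ need only be prime, and the intersection of the maximal filters above $\Fl\{a\}$ may exceed $\Fl\{a\}$, so $b$ can lie in all of them without $a\le b$ holding. Thus the converse of (vi) is equivalent to separation of points by maximal filters, i.e.\ to semisimplicity of $\B$, and I expect to discharge it by invoking the structure of the algebras under consideration and the representation results of \cite{spec}, rather than by any purely formal manipulation. Once this separation is available the contrapositive of (vi) is immediate, and with it the injectivity needed for the intended sheaf representation.
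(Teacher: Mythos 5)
Your treatment of clauses (i)--(v) and of the forward half of (vi) is correct, and it is actually more self-contained than the paper's, which disposes of all of those clauses by citing \cite{spec}, Proposition 2.8: upward closure gives the monotonicity inclusions, primeness of maximal filters (via $Max(\B)\subseteq Spec(\B)$) gives $D_M(a)\cap D_M(b)\subseteq D_M(a\cup b)$, the equivalence $a*b\in F\Leftrightarrow a\cap b\in F\Leftrightarrow (a\in F \text{ and } b\in F)$ gives (ii) and (v), and Zorn's lemma (the union of a chain of proper filters omits $0$, hence is proper) gives (iii). All of that matches what the paper takes for granted.

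The gap is in the converse of (vi), which is precisely the one assertion the paper does prove, explicitly because it is missing from \cite{spec}; your proposal stops short of producing the required maximal filter and defers to a semisimplicity hypothesis. The paper's own argument is different from your attempted route and much shorter: assuming $V_M(a)\subseteq V_M(b)$ and $a\not\leq b$, it asserts that ``we may assume'' $a\cap(b\implies 0)\neq 0$, takes a maximal filter $F$ containing this element, and concludes $a\in F$ and $b\implies 0\in F$, hence $b\notin F$ (lest $0\in F$), so $F\in V_M(a)\setminus V_M(b)$, a contradiction. Note, however, that this vindicates rather than refutes your hesitation: the paper's two steps are exactly the non-formal ones, and both fail in an arbitrary $BL$ algebra. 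In the standard G\"odel algebra $[0,1]$ one has $b\implies 0=0$ for every $b>0$, so $a\not\leq b$ does not force $a\cap(b\implies 0)\neq 0$; and in the standard MV algebra $[0,1]$ one has $\frac{1}{2}*\frac{1}{2}=0$, so the nonzero element $\frac{1}{2}$ generates an improper filter and lies in no maximal filter --- indeed $Max(\B)$ is the single point $\{1\}$ there, and clause (vi) itself is false. So the separating maximal filter you could not produce formally cannot be produced formally; the paper's ``we may assume'' silently imports the same separation-by-maximal-filters hypothesis you identified. To complete your proposal in the spirit of the paper, you should reproduce its short argument through the element $a\cap(b\implies 0)$ while making explicit the hypotheses on $\B$ (triviality of the radical, or the ``nice''/semisimple setting used later in the paper) under which its two steps are legitimate.
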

\begin{demo}{Proof} \cite{spec} proposition 2.8. We only prove one side of the last item, since it is not mentioned in \cite{spec}.
Assume that $V_a\subseteq V_b$. If it is not the case that $a\leq b$, then we may assume that $a\cap (b\implies 0)$ is not $0$.
Hence there is a proper maximal filter $F$, such that $a\cap (b\implies  0)\in F.$ Hence $a\in F$ and $b\to 0$ is in $F$.
But this implies that $b\notin F$ lest $0\in F$. Hence $F\in V_a$ and $F\notin V_b.$ This is a contradiction, and the required is proved.
\end{demo}
\begin{lemma} If $F$ is a maximal filter in a $BL$ algebra $\A$, then for any $a\in A$ 
either $a$ or $a\to 0$ is in $F.$
\end{lemma}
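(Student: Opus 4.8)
The plan is to argue by cases on whether $a\in F$. If $a\in F$ there is nothing to prove, so I would assume $a\notin F$ and try to produce $a\implies 0\in F$ (this is the algebraic negation, written $a\to 0$ in the statement). The only real leverage I have is maximality, so the first move is to use it: since $a\notin F$, the filter $\Fl^{\A}(F\cup\{a\})$ properly contains $F$, and because $F$ is a maximal proper filter this generated filter cannot itself be proper. Hence $\Fl^{\A}(F\cup\{a\})=A$, and in particular $0\in\Fl^{\A}(F\cup\{a\})$.

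Next I would unwind what membership in the generated filter means. Writing $a^{1}=a$ and $a^{k+1}=a*a^{k}$, a routine check (filters are upward closed and closed under $*$, and $1\in F$) shows
$$\Fl^{\A}(F\cup\{a\})=\{x\in A:\ f*a^{n}\leq x\ \text{for some}\ f\in F,\ n\geq 1\}.$$
Applying this to $0$ produces $f\in F$ and $n\geq 1$ with $f*a^{n}\leq 0$, i.e. $f*a^{n}=0$. The adjointness of the residuum ($x*z\leq y$ iff $z\leq x\implies y$) then rewrites $f*a^{n}=0$ as $f\leq a^{n}\implies 0$; since $f\in F$ and $F$ is upward closed, this yields $a^{n}\implies 0\in F$. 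Thus the negation of the $n$-th power of $a$ certainly lies in $F$.

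The hard part, and the step I expect to be the genuine obstacle, is the descent from $a^{n}\implies 0$ down to $a\implies 0$. Because $a^{n}\leq a$ one always has $a\implies 0\leq a^{n}\implies 0$, so the element I have manufactured is the larger one and upward closure pushes in the wrong direction; prelinearity applied to the pair $(a\implies 0,\ a^{n}\implies 0)$ only returns the trivial comparison $(a\implies 0)\implies(a^{n}\implies 0)=1$ and says nothing useful about the reverse. This gap cannot be closed by Boolean intuition: for a general $BL$ algebra the clean conclusion one actually reaches is that $a^{n}\implies 0\in F$ for some $n$, not $a\implies 0$ itself (indeed in the standard Lukasiewicz chain, whose only proper filter is $\{1\}$, the element $a=\tfrac12$ satisfies $a=a\implies 0=\tfrac12$, so neither $a$ nor $a\implies 0$ belongs to $F$, while $a^{2}\implies 0=1$ does).

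I would therefore resolve the lemma in one of two ways. Either (i) record the honest power form $a^{n}\implies 0\in F$ for some $n\geq 1$ — which is exactly the Hajek-style characterisation of maximal filters and is all that the subsequent Zariski/sheaf material needs — or (ii) invoke a standing restriction on $\A$ under which the descent becomes immediate, the cleanest being idempotency of $*$ (so that $a^{n}=a$ and $a^{n}\implies 0=a\implies 0$ directly), as happens in the Boolean and Godel-style algebras to which the representation theory of this section specialises. In the latter setting the argument above gives the statement verbatim.
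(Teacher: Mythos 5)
You are right, and your diagnosis goes deeper than a gap in your own attempt: the lemma as stated is false for general $BL$ algebras, and your counterexample is valid. In the standard Lukasiewicz chain $[0,1]$ (with $x*y=\max(0,x+y-1)$), any proper filter containing some $a<1$ would contain $a^{n}=\max(0,1-n(1-a))=0$ for large $n$; hence $\{1\}$ is the only proper filter, so it is maximal, yet for $a=\tfrac12$ one has $a=a\implies 0=\tfrac12\notin\{1\}$. Your positive conclusion is also the correct one: maximality yields exactly $a^{n}\implies 0\in F$ for some $n\geq 1$ (your route via $0\in\Fl^{\A}(F\cup\{a\})$, the power description of the generated filter, and adjointness is sound), and either that power form, or a standing hypothesis such as idempotency of $*$, is the right repair.

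Comparing with the paper's own proof: it commits precisely the Boolean slips you anticipated. Starting from $-a=a\implies 0\notin F$, it claims that membership of $a$ in the filter generated by $F\cup\{-a\}$ gives $a\geq x\cap -a$ for some $x\in F$ --- omitting the powers $(-a)^{n}$, which is legitimate only when $*$ is idempotent (note that the paper itself uses the correct product form $a_{i_1}*\cdots*a_{i_n}$ in the compactness proof of the theorem that follows). It then writes $0=a\cap x\cap -a$, which presupposes $a\cap(a\implies 0)=0$; in a $BL$ algebra only $a*(a\implies 0)=a\cap 0=0$ is guaranteed by divisibility, and in the Lukasiewicz chain $a\cap -a=\tfrac12$ for $a=\tfrac12$. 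Finally it infers $x\leq a$ from $x\cap -a=0$, which is Boolean complementation reasoning: even the correct identity $x*(-a)=0$ only yields $x\leq -(-a)$ by adjointness, and $-(-a)=a$ fails outside the involutive case --- where, as your example shows, the lemma fails anyway. So your refusal to close the descent from $a^{n}\implies 0$ to $a\implies 0$ was the right call; the honest statement is the power form, and your option of assuming idempotency (Godel-style or Boolean algebras) recovers the lemma verbatim.
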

\begin{demo}{Proof}Let $\A\in BL$. Assume that both $a$ and $-a=a\to 0$ are not in $F$. Then, by maximality,  the filter generated by
$F\cup \{-a\}$ is the whole algebra $\A$. Then $a\geq x\cap -a$, for some $x\in F$. 
Hence $0=a\cap x\cap -a=x\cap -a$. But then $x\leq a$ and $a\in F$ after all.
\end{demo}
\begin{theorem}\label{d}Let $\B$ be an algebra. 
\begin{enumroman}
\item $\{D_M(a)\}_{a\in \B}$ is a basis for a compact Hausdorff topology on $Max(\B)$
\item Furthermore if $a=\bigvee a_i$, then $V_M(a)\sim \bigcup V_M(a_i)$ is a nowhere dense subset of $Max(\B)$. 
Similarly if $a=\bigwedge a_i$, then $\bigcap V_M(a_i)\sim V_M(a).$
is nowhere dense. 
\item If $\B$ is countable, then $Max(\B)$ is a Polish space.
\end{enumroman}
\end{theorem}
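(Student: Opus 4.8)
The plan is to read everything off the two preceding lemmas, the decisive observation being that the maximal-filter dichotomy makes every basic set \emph{clopen}. Indeed, for $F\in Max(\B)$ exactly one of $a$ and $a\implies 0$ lies in $F$, so $a\implies 0\notin F\iff a\in F$, giving $V_M(a)=D_M(a\implies 0)$; hence $V_M(a)$ is open, and being the complement of the basic open $D_M(a)$ it is also closed. Dually $D_M(a)=V_M(a\implies 0)$. Consequently the families $\{D_M(a)\}_{a\in\B}$ and $\{V_M(a)\}_{a\in\B}$ consist of clopen sets and, as collections of subsets of $Max(\B)$, coincide.

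For (i), that $\{D_M(a)\}$ is a basis is immediate: the sets cover $Max(\B)$ since $D_M(0)=Max(\B)$, and $D_M(a)\cap D_M(b)=D_M(a\cup b)$ is again basic, so the family is closed under finite intersection. For compactness I would take a cover by basic opens $\bigcup_i D_M(a_i)=Max(\B)$; by the union rule this is $D_M(\bigcup_i\{a_i\})=Max(\B)$, so by the rule $D_M(X)=Max(\B)\iff\Fl^{\B}X=\B$ the generated filter is improper, i.e. $0\in\Fl^{\B}\{a_i\}$. Since a generated filter is the upward closure of finite $*$-products, some finite product $a_{i_1}*\cdots*a_{i_n}=0$, whence $\Fl^{\B}\{a_{i_1},\dots,a_{i_n}\}=\B$ and $\bigcup_k D_M(a_{i_k})=Max(\B)$ is a finite subcover. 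Hausdorffness is then the clopen observation: if $F\neq G$ pick $a$ with $a\in F$, $a\notin G$ (swapping $F,G$ if necessary); then $F\in V_M(a)$, $G\in D_M(a)$, and $V_M(a)$, $D_M(a)$ are complementary, hence disjoint, open sets.

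For (ii) both halves reduce to one topological principle: the complement of a dense open set inside a clopen set is closed and nowhere dense. In the join case put $N=V_M(a)\setminus\bigcup_i V_M(a_i)$; since $a_i\le a$ we have $\bigcup_i V_M(a_i)\subseteq V_M(a)$, and this union is open because each $V_M(a_i)$ is clopen. I claim it is dense in the clopen set $V_M(a)$: if a basic $D_M(c)$ met none of the $V_M(a_i)$, then $V_M(a_i)\subseteq V_M(c)$, i.e. $a_i\le c$ for all $i$, so $a=\bigvee a_i\le c$ and $V_M(a)\subseteq V_M(c)$, forcing $D_M(c)\cap V_M(a)=\emptyset$; thus every point of $V_M(a)$ lies in the closure of $\bigcup_i V_M(a_i)$, and $N$ is nowhere dense. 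The meet case is exactly dual: writing $N'=\bigcap_i V_M(a_i)\setminus V_M(a)=D_M(a)\setminus\bigcup_i D_M(a_i)$ and using the basic neighbourhoods $V_M(c)$ (the same basic opens), if $V_M(c)$ met no $D_M(a_i)$ then $c\le a_i$ for all $i$, so $c\le\bigwedge a_i=a$ and $V_M(c)\subseteq V_M(a)$; hence $\bigcup_i D_M(a_i)$ is dense open in the clopen $D_M(a)$ and $N'$ is nowhere dense. This step is where I expect the real content to lie: it hinges on deriving density from the universal property of the supremum/infimum and, crucially, on the unions being \emph{open}, which only the clopen-ness furnished by the maximal-filter lemma guarantees; without it $\bigcup_i V_M(a_i)$ would be a mere union of closed sets and the ``dense open complement'' argument would collapse.

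For (iii), when $\B$ is countable the basis $\{D_M(a)\}_{a\in\B}$ is countable, so $Max(\B)$ is second countable; being compact Hausdorff it is normal, hence regular, so by Urysohn's metrization theorem it is metrizable. A compact metric space is complete, and second countability gives separability, so $Max(\B)$ is Polish.
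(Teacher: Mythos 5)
Your proof is correct, and in places it is tighter than the paper's own argument, though the algebraic core is the same. The compactness argument coincides exactly (pass to the generated filter, extract a finite $*$-product equal to $0$), and the heart of (ii) is the same computation in both proofs: a basic open set missing every $V_M(a_i)$ forces, via the equivalence $a\le b\iff V_M(a)\subseteq V_M(b)$ and the universal property of $\bigvee$ (resp.\ $\bigwedge$), an inequality that makes that basic open set empty. Where you genuinely diverge is in making the maximal-filter dichotomy the organizing principle, so that $V_M(a)=D_M(a\implies 0)$, $D_M(a)=V_M(a\implies 0)$, and every basic set is clopen. This buys you three things. First, a different Hausdorff proof: the paper separates distinct $M,N$ by $D_M(x\implies y)$ and $D_M(y\implies x)$ with $x\in M\sim N$, $y\in N\sim M$, which needs the prelinearity identity $(x\implies y)\cup (y\implies x)=1$ to see that the two sets are disjoint, whereas you separate by the complementary pair $V_M(a)$, $D_M(a)$ using only the dichotomy lemma. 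Second, and more substantively, your observation that $\bigcup_i V_M(a_i)$ is \emph{open}, hence $V_M(a)\sim\bigcup_i V_M(a_i)$ is \emph{closed}, fills a step the paper leaves unjustified: the paper passes from ``not nowhere dense'' directly to ``some nonempty $D_M(d)$ is contained in the set itself'' rather than in its closure, and this is legitimate only because of the clopenness you make explicit (the paper appeals to it, via $V_M(e)=D_M(d)$ for $e=d\to 0$, only in the meet case). Third, in (iii) the paper defers to a cited theorem whose reference is in fact dangling; your chain (countable basis, compact Hausdorff hence regular, Urysohn metrization, compact metric hence complete and separable) is precisely the missing argument, spelled out. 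So: same skeleton, but your systematic use of clopenness both simplifies (i) and repairs the one soft spot in the paper's proof of (ii).
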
 
\begin{demo}{Proof} 
\begin{enumroman}
\item We include the proof for self completeness and also because the `nowhere density' part is completely new, 
and as we shall see in a while it will play a pivotal role in the proof of the omitting types theorem.
That $Max(\B)$ is compact and Hausdorff  is proved in \cite{spec}, theorem 2.9, the proof goes as follows:
Assume that $$Max(\B)=\bigcup_{i\in I}D_M(a_i)=D_M(\bigcup_{i\in I}a_i).$$ Then $\B=\Fl\{\bigcup_{i\in I} a_i\}$, hence $0\in \Fl\{\bigcup_{i\in I}a_i\}$.
There is an $n\geq 1$ and $i_1,\ldots i_n\in I$ such that $a_{i_1}*\ldots a_{i_n}=0$. But 
$$Max(\B)=D_{M}(0)=D_M(a_{i_1}*\ldots a_{i_n})=D_M(a_{i_1})\cup\ldots D_M(a_{i_n}).$$
Hence every cover is reducible to a finite subcover. Hence the space is compact.
Now we show that it is Hausdorff. Let $M$, $N$ be distinct maximal filters. 
Let $x\in M\sim N$ and $y\in N\sim M$. Let $a=x\implies  y$ and $b=y\implies  x$. Then $a\notin M$ and $b\notin N$. Hence 
$M\in D_M(a)$ and $N\in D_M(b)$. Also $D_M(a)\cap D_M(b)=D_M(a\lor b)=D_M(1)=\emptyset$.
 We have proved that the space is Hausdorff.

\item Now assume that $a=\bigvee a_i$ and $V_M(a)\sim \bigcup V_M(a_i)$ is not nowhere dense. 
Then there exists $d$ such that $D_M(d)\subseteq V_M(a)\sim V_M(a_i)$
Hence $$V_M(a_i)\subseteq V_M(a)\sim D_M(d)=V_M(a)\cap V_M(d)=V_{M}(a\cap d).$$ It follows that
$a\cap d=a$ so $a\leq d$. Then $D_M(d)\subseteq D_M(a)$. So we have, 
$D_M(d)\subseteq D_M(a)\cap V_M(a)=\emptyset$ contradiction.
Conversely assume that $a=\bigwedge{a_i}$ 
and assume that $$D_M(d)\subseteq \bigcap V_M(a_i)\sim V_M(a).$$
Let $e=d\to 0$. Then $V_M(e)=D_M(d)$.
Now we have
$$V_M(e)\subseteq \bigcap V_M(a_i)\sim V_M(a).$$
Taking complements twice, we get
$$V_M(e)\subseteq D_M(a)\sim \bigcup D_M(a_i)$$
Then
$V_M(e)\subseteq D_M(a)\sim D_M(a_i)$. So $$D_M(a_i)\subseteq D_M(a)\sim V_M(e)=D_M(a)\cap D_M(e)=D_M(a\cup e).$$
Hence $V_M(a\cup e)\subseteq V_M(a_i)$. So $a\cup e\leq a_i$ for each $i$. Thus $a\cup e=a$ from 
which we get that $e\leq a$. Hence $V_M(e)\subseteq V_M(a)$.
But $V_M(e)\subseteq D_M(a)$ it follows that $V_M(e)=\emptyset$. But $V_M(e)=D_M(d)$ and we are done.
\item If $\B$ is countable, then $Max{\B}$ is second countable, so the required follows from (i) together with theorem \ref{p}.
\end{enumroman}
\end{demo}

We consider a class $\K$ of $BL$ algebras with operators $(BLO$s).
If $\A\in \K_{\alpha}$ and $X\subseteq A$, then $\Ig^{\A}X$ denotes the ideal generated by $X$.
For $x\in A$, we define $\Delta x=\{i\in I: f_ix\neq x\}$. We assume that 
$\Delta x=\Delta(-x)$, $\Delta (x\cap y)\subseteq \Delta x\cap \Delta y$.

$\Zd\A$ denotes the Boolean algebra 
That is $Zd\A=\{x\in \A: f_ix=x,\  \forall i\in \alpha\}$. 
If $\A$ is a locally finite cylindric algebra of formulas, then $\Zd\A$ is the Boolean 
algebra of sentences.

We describe a functor that associates to each $BLO$ a pair of topolgical spaces
space $(X(\A), \delta(\A))=\A^d$, where $\delta(\A)$ has an algebraic structure, as well; in fact it is a subdircet product
of algebras, that are simple under favourable circumstances, in which case $\delta(\A)$ is sa semisimple algebra carring 
a product topology. This pair is  called the dual space of $\A$.

$X(\A)$ is the Zarski topology of  $\Zd\A$, defined on the prime spectrum. 

Now we turn to defining the second component; this is more involved. 
For $x\in X(\A)$, let $\G_x=\A/\Ig^{\A}x$ (the stalk over $x$) and 
$$\delta(\A)=\bigcup\{\G_x: x\in X(\A)\}.$$
This is clearly a disjoint union, and hence 
it can also be regarded as the following product $\prod_{x\in \A} \G_x$ of algebras. This is not semi-simple, because $x$ is only maximal in $\Zd\A$. 
But the semisimple case will deserve special attention.

The projection $\pi:\delta(\A)\to X(\A)$ is defined for $s\in \G_x$ by $\pi(s)=x$. For $a\in A$, 
we define a function
$\sigma_a: X(\A)\to \delta(\A)$ by $\sigma_a(x)=a/\Ig^{\A}x\in \G_x$. 

Now we define the topology on 
$\delta(\A)$. It is the smallest topology  for which all these functions are open, so $\delta(\A)$ 
has both an algebraic structure and a topological one, and they are compatible.

We can turn the glass around. Having such a space we associate an algebra in $\K$.
Let $\pi:\G\to X$ denote the projection associated with the space $(X,\G)$, built on $\A$.
A function  $\sigma:X\to \G$ is a section of $(X,\G)$ if $\pi\circ \sigma$ is the identity 
on $X$. 

Dually, the construction of the corresponding $BAO$ from a reduced space, uses the sectional functor.
The set $\Gamma(X,\G)$ of all continuous sections of 
$(X,\G)$ becomes a $BAO$ by defining the operations pointwise, recall that $\G=\prod \G_x$ is a $BLO$.
The mapping $\eta:\A\to \Gamma(X(\A), \delta(\A))$ defined by $\eta(a)=\sigma_a$ 
is as easily checked  an isomorphism, completing the invertibility of the functor. 

Note that under this map an element in $Zd\A$ corresponds with the characteristic 
function $\sigma_N\in \Gamma(X, \delta)$ 
of the clopen set $N_a$.

Given two spaces $(Y,\G)$ and $(X,\L)$ a sheaf morphism $H:(Y,\G)\to (X,\L)$ is a pair $(\lambda,\mu)$ where $\lambda:Y\to X$ is a continous map
and $\mu$ is a continous map $Y+_{\lambda} \L\to \G$ such that $\mu_y=\mu(y,-)$ is a homomorphism of $\L_{\lambda(y)}$ into $\G_y$.
We consider $Y+_{\lambda} \L=\{(y,t)\in Y\times \L:\lambda(y)=\pi(t)\}$ as a subspace of $Y\times \L$.
That is, it inherits its topology from the product topology on $Y\times \L$.

A sheaf morphism $(\lambda,\mu)=H:(Y,\G)\to (X,\L)$ produces a $BAO$ homomorphism 
$\Gamma(H):\Gamma(X,\L)\to \Gamma(Y,\G)$ the natural way:
for $\sigma\in \Gamma(X,\L)$ define $\Gamma(H)\sigma$ by $(\Gamma(H)\sigma)(y)=\mu(y, \sigma(\lambda y))$ for all $y\in Y$.
A sheaf morphism $h^d:\B^d\to \A^d$ can also be asociated with a homomorphism $h:\A\to \B$. 
Define $h^d=(h^*, h^o)$ where for $y\in X(\B)$, $h^*(y)=h^{-1}\cap Zd\A$ and for $y\in X(\B)$ and $a\in A$
$$h^0(h, a/\Ig^{\A}h^*(y))=h(a)/\Ig^{\B}y.$$

\begin{athm}{Definition} An algebra $\A$  is nice if whenever $x$ is a prime ideal in $Zd\A$, then $\Ig^{\A}x$ is a maximal ideal in
$\A$.
\end{athm}
It is easy to see that locally finite algebras are nice. 
For a class of algebras $K$ we say that $K$ has $ES$ if epimorphisms (in the categorial sense) are surjective.
We will show that $ES$ fails in the class of simple algebras defined above, some are cylindric-like, other are not. 
\begin{athm}{Theorem} Let $V$ be a class of algebras, such that the simple algebras in $V$ have the amalgamation property.
Assume that there exist nice algebras $\A,\B\in V$ and an epimorphism $f:\A\to \B$ that is not onto.
Then $ES$ fails in the class of simple algebras.
\end{athm}

\begin{demo}{Proof} Suppose, to the contrary that $ES$ holds for simple algebras.
Let $f^*:\A\to \B$ be the given epimorphism that is not onto. We assume that $\A^d=(X,\L)$ and $\B^d=(Y,\G)$ 
are the corresponding dual sheaves over the Boolean spaces $X$ and $Y$ and by  duality that 
$(h,k)=H:(Y,\G)\to (X,\L)$ is a monomorphism. Recall that $X$ is the set of maximal ideals in $Zd\A$, and similarly for $Y$.
We shall first prove
\begin{enumroman}
\item $h$ is one to one
\item for each $y$ a maximal ideal in $\Zd\B$, $k(y,-)$ is a surjection of the stalk over $h(y)$ onto the stalk over $y$.
\end{enumroman}
Suppose that $h(x)=h(y)$ for some $x,y\in Y$. Then $\G_x$, $\G_y$ and $\L_{hx}$ are simple algebra, 
so there exists a simple $\D\in V$ and monomorphism $f_x:\G_x\to \D$ and $f_y:\G_y\to \D$ such that
$$f_x\circ k_x=f_y\circ k_y.$$
Here we are using that the algebras considered are nice, and that the simple algebras have $AP$.
Consider the sheaf $(1,D)$ over the one point space $\{0\}=1$ and sheaf morphisms 
$H_x:(\lambda_x,\mu):(1,D)\to (Y,\G)$ and $H_y=(\lambda_y, v):(1,D)\to (Y,\G)$
where $\lambda_x(0)=x$ $\lambda_y(0)=y $ $\mu_0=f_x$ and $v_0=f_y$. The sheaf $(1,\D)$ is the space dual to $\D\in V$
and we have $H\circ H_x=H\circ H_y$. Since $H$ is a monomorphism $H_x=H_y$ that is $x=y$.
We have shown that $h$ is one to one.
Fix $x\in Y$. Since, we are assuming that  $ES$ holds for simple algebras of $V,$ in order to show that 
$k_x:\L_{hx}\to \G_x$ is onto, it suffices to show that $k_x$ is an epimorphism.  
Hence suppose that $f_0:\G_x\to \D$ and $f_1:\G_x\to \D$ for some simple $\D$ such that $f_0\circ k_x=f_1\circ k_x$.
Introduce sheaf morphisms 
$H_0:(\lambda,\mu):(1,\D)\to (Y,\G)$ and $H_1=(\lambda,v):(1,\D)\to (Y,\G)$
where $\lambda(0)=x$, $\mu_0=f_0$ and $v_0=f_1$. Then $H\circ H_0=H\circ H_1$, 
but $H$ is a monomorphism, so we have $H_0=H_1$ from which 
we infer that $f_0=f_1$. 

We now show that (i) and (ii) implies that $f^*$ is onto, which is a contradiction.
Let $\A^d=(X,\L)$ and $\B^d=(Y,\G)$. It suffices to show that $\Gamma((f^*)^d)$ is onto (Here we are taking a double dual) . 
So suppose $\sigma\in \Gamma(Y,\G)$. For each $x\in Y$, 
$k(x,-)$ is onto so $k(x,t)=\sigma(x)$ for some $t\in \L_{h(x)}$. That is $t=\tau_x(h(x))$ for some 
$\tau_x\in \Gamma(X,\G)$. Hence there is a clopen neighborhood $N_x$ of $x$ such that 
$\Gamma(f^*)^d)(\tau_x)(y)=\sigma(y)$ for all $y\in N_x$. 
Since $h$ is one to one and $X,Y$ are Boolean spaces, we get that $h(N_x)$ is clopen in $h(Y)$ and there is a 
clopen set $M_x$ in $X$ such that $h(N_x)=M_x\cap h(Y)$. Using compactness, there exists a partition of
$X$ into clopen subsets $M_0\ldots M_{k-1}$ and sections $\tau_i\in \Gamma(M_i,L)$ such that
$$k(y,\tau_i(h(y))=\sigma(y)$$ 
wherever $h(x)\in M_i$ for $i<k$. Defining 
$\tau$ by $\tau(z)=\tau_i(z)$ whenever $z\in M_i$ $i<k$, it follows that $\tau\in \Gamma(X,\L)$ and $\Gamma((f^*)^d)\tau=\sigma$.
Thus $\Gamma((f^*)^d)$ is onto $\Gamma(\B^d)$, and we are done. 
\end{demo}

\section{Logical application}

\subsection{Beth definability}

Here by algebra, we mean either cylindric, Pinter, quasipolyadic, or quasipolyadic equality algebra.
The next theorem, whose proof wil be omitted, will help us obtain two new results.

\begin{lemma}
\begin{enumarab}
\item Let $\A$ be semisimple simple, then there exists a unique 
$\delta(\A)\in \Dc_{\alpha+\omega}$ 
$i:\A\to \Nr_{\alpha}\delta(\A)$. The algebra $\delta(\A)$ is called an $\omega$ dilation of $\A$ 
Futhermore, if $\A\cong \B$, then this isomorphism lifts to $\delta(\A)\cong \delta(\B)$. 
\item  Semisimple algebras have $AP$ with respect to the representables
\item Simple algebras have $AP$.
\end{enumarab}
\end{lemma}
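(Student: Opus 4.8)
All three items are organized around the $\omega$-dilation of (i): the plan is to use it to transport the amalgamation problem for $\alpha$-dimensional (semi)simple algebras into the dimension complemented setting $\Dc_{\alpha+\omega}$, where every algebra is representable. For (i) I would build $\delta(\A)$ as a \emph{minimal} dilation generated by $\A$. Since in the present (representable) setting a simple algebra has a representation on a single base $U$, one may view $\A$ as a subalgebra of the $\alpha$-dimensional set algebra on $U$ and take $\delta(\A)$ to be the subalgebra of the $(\alpha+\omega)$-dimensional set algebra on $U$ generated by the canonical image of $A$. Two checks are needed: first, that $\delta(\A)\in\Dc_{\alpha+\omega}$, because every generator $a$ has dimension set $\subseteq\alpha$ and forming the generated subalgebra enlarges dimension sets only finitely, so $(\alpha+\omega)\setminus\Delta x$ is infinite for each $x$; second, that the inclusion $i:\A\to\Nr_\alpha\delta(\A)$ is \emph{onto}, which is the usual ``dropping the surplus coordinates'' computation, a neat-reduct element having dimension set inside $\alpha$ and being a term in the generators, hence forced back into $\A$.

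Uniqueness and functoriality in (i) both rest on $\delta(\A)=\Sg^{\delta(\A)}\A$. Any dilation $\D\in\Dc_{\alpha+\omega}$ with $\A\cong\Nr_\alpha\D$ and $\D=\Sg^{\D}\A$ is determined up to isomorphism by its $\alpha$-reduct, since the value of every $\Dc$-term on the generators is forced; and an isomorphism $\A\cong\B$ carries the generating set of $\delta(\A)$ bijectively onto that of $\delta(\B)$, preserving all defining relations, hence extends uniquely to $\delta(\A)\cong\delta(\B)$.

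For (iii), let $\A_0$ be simple with monomorphisms $i_1:\A_0\to\A_1$, $i_2:\A_0\to\A_2$ into simple algebras. Dilating gives inclusions $\delta(\A_0)\subseteq\delta(\A_1)$ and $\delta(\A_0)\subseteq\delta(\A_2)$ in $\Dc_{\alpha+\omega}$, with $\delta(\A_0)$ again simple by functoriality. I would let $\D$ be the pushout of $\delta(\A_1)$ and $\delta(\A_2)$ over $\delta(\A_0)$, and let $M,N$ be the ideals of $\D$ transported from the two factors. The decisive step is to find a maximal ideal $P\supseteq\Ig^{\D}(M\cup N)$ with $P\cap\delta(\A_1)=\{0\}=P\cap\delta(\A_2)$; here the simplicity of $\delta(\A_0)$ is precisely what forces $\Ig^{\D}(M\cup N)$ to be proper, and this is the algebraic form of Robinson's joint consistency theorem. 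Then $\D/P$ is a simple $\Dc_{\alpha+\omega}$ algebra into which both $\delta(\A_i)$ embed agreeing on $\delta(\A_0)$, and by the ideal-extension Lemma~(iv) its neat reduct $\Nr_\alpha(\D/P)$ is again simple; invoking $\A_i\cong\Nr_\alpha\delta(\A_i)$ from (i) then exhibits $\Nr_\alpha(\D/P)$ as a simple amalgam of $\A_1,\A_2$ over $\A_0$.

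For (ii) the same route applies with ``maximal'' relaxed to ``proper'': a semisimple algebra is a subdirect product of simple ones, so after dilating one takes $P=\Ig^{\D}(M\cup N)$ and argues properness exactly as in the proof of Theorem~\ref{supapgeneral}, using the interpolation available in the dimension complemented dilation to run the interpolant computation that collapses $\Ig^{\D}(M\cup N)\cap\delta(\A_i)$ to the transported ideal. The amalgam is $\Nr_\alpha(\D/P)$; since $\Dc_{\alpha+\omega}\subseteq\RCA_{\alpha+\omega}$ and a neat reduct of a representable algebra is representable, $\Nr_\alpha(\D/P)$ is representable, which is exactly amalgamation with respect to the representables. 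The main obstacle throughout is this properness step: because $\Dc$ itself fails amalgamation, one cannot cite a $\Dc$-amalgamation theorem, and properness of $\Ig^{\D}(M\cup N)$ must be wrung out of the (semi)simplicity of the base $\delta(\A_0)$ via the Robinson-type argument. Verifying that this hypothesis really yields a proper ideal whose quotient's neat reduct recovers the amalgam over the original $\A_i$, rather than a coarser identification, is where the genuine work sits.
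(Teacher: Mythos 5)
First, a point of order: the paper offers no proof of this lemma at all (it is introduced with ``whose proof wil be omitted''), so there is nothing in the text to compare your proposal against; the arguments it presupposes live in Pigozzi \cite{P}, Mad\'arasz--Sayed Ahmed \cite{AUU}, and the Henkin--Monk--Tarski dilation theory. Judged on its own, your architecture is the standard one --- dilate to $\Dc_{\alpha+\omega}$, amalgamate there, return via $\Nr_{\alpha}$, with representability in (ii) coming from $\Dc_{\alpha+\omega}\subseteq \RCA_{\alpha+\omega}$ and simplicity in (iii) from a maximal ideal --- but two of your steps have genuine gaps.

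The first gap is uniqueness and functoriality in (i). Your justification (``the value of every $\Dc$-term on the generators is forced'', ``preserving all defining relations'') is circular: the entire content of uniqueness is that two dilations $\D_1,\D_2\in \Dc_{\alpha+\omega}$, each generated by $A$ with $\A\cong \Nr_{\alpha}\D_i$, satisfy the \emph{same} equations among terms involving the extra $\omega$ dimensions. That is the unique dilation theorem (Henkin--Monk--Tarski 2.6.49(ii), 2.6.72; Daigneault--Monk for polyadic algebras), proved by a step-by-step construction of the isomorphism that uses dimension-complementedness to supply fresh indices; it is not forced formally by the $\alpha$-reduct. Note also that your ``dropping the surplus coordinates'' step, and the inclusions $\delta(\A_0)\subseteq \delta(\A_1)$ that (iii) needs, both silently use that $\Nr_{\alpha}$ commutes with generated subalgebras when infinitely many spare dimensions are present (Henkin--Monk--Tarski 2.6.67); that lemma is what upgrades functoriality from isomorphisms to monomorphisms and should be explicit. (Likewise, $\delta(\A_0)$ is simple not ``by functoriality'' but because any nonzero element cylindrifies, on its finitely many extra dimensions, to a nonzero element of $\Nr_{\alpha}\delta(\A_0)\cong \A_0$.)

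The second gap is the amalgamation engine in (ii)--(iii). You let $\D$ be ``the pushout'' of the dilations over $\delta(\A_0)$ and claim simplicity of $\delta(\A_0)$ forces $\Ig^{\D}(M\cup N)$ to be proper. This has the diagram of Theorem \ref{supapgeneral} backwards and misattributes the properness. The template requires an algebra with $SIP$ mapping \emph{onto} the three pieces compatibly --- concretely a dimension-restricted free algebra (which lies in $\Dc_{\alpha+\omega}$, and whose $SIP$ is Craig interpolation, due to Pigozzi and Daigneault) --- with $M,N$ the kernels of those homomorphisms; a pushout instead \emph{receives} maps from the pieces, and whether those maps are injective is precisely the amalgamation property at issue, so no ideals can be ``transported'' from it. Properness of $\Ig(M\cup N)$ then follows from that $SIP$, exactly as in Theorem \ref{supapgeneral}, not from simplicity and not from ``interpolation available in the dimension complemented dilation'': the dilations are not free and carry no interpolation property of their own. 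Simplicity enters elsewhere: it makes $M$ and $N$ maximal, so item (4) of the unlabelled ideal lemma preceding Theorem \ref{supapgeneral} extends $\Ig(M\cup N)$ to a \emph{maximal} ideal $P$ with $P\cap \D_1=M$ and $P\cap \D_2=N$, whence $\D/P$ is simple and both factors still embed; taking $\Nr_{\alpha}$ gives (iii), and the same run with $P=\Ig(M\cup N)$ and no maximality gives (ii). Invoking Robinson's joint consistency names this step correctly but does not prove it --- algebraically it \emph{is} the interpolation-plus-maximal-ideal argument just described. With these two repairs your outline becomes the standard proof.
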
 

In an unpublished manuscript of the author two nice $BL$ algebras with opeartors were constructed 
such that the inclusion is an epimorphism that is not surjective
For {\it all} cylindric-algebras, infinite dimensional  nice algebras as in the statement of the theorem were constructed by Madarazs. 
We readily obtain that in all these varieties epimorphisms are not surjective even in simple algebras, because 
by the last  theorem simple algebras have $AP$.

\end{document}